\patchcmd{\ttlh@hang}{\parindent\z@}{\parindent\z@\leavevmode}{}{}
\patchcmd{\ttlh@hang}{\noindent}{}{}{}
\titleformat{\section}[hang]
  {\normalfont\sffamily\bfseries}
  {\thesection.}{.5em}{}
\newtheoremstyle{statements}%
{\bigskipamount}
{\bigskipamount}
{\itshape}
{}
{\bfseries\sffamily}
{.}
{.5em}
{}
\newtheoremstyle{definitions}%
{\bigskipamount}
{\bigskipamount}
{}
{}
{\bfseries\sffamily}
{.}
{.5em}
{}
\theoremstyle{statements}
\newtheorem{thm}{Theorem}[section]
\newtheorem{lem}[thm]{Lemma}
\newtheorem{cor}[thm]{Corollary}
\newtheorem*{thm*}{Theorem}
\newtheorem*{lem*}{Lemma}
\newtheorem*{ques*}{Question}
\newtheorem*{conj*}{Conjecture}
\newtheorem*{prob*}{Problem}
\newtheorem*{prop*}{Statement}
\newtheorem*{cor*}{Corollary}
\theoremstyle{definitions}
\newtheorem{df}[thm]{Definition}
\newtheorem{rems}[thm]{Remarks}
\newtheorem*{df*}{Definition}
\newtheorem*{rem*}{Remark}
\newtheorem*{exple*}{Example}
\DeclareMathOperator{\cdim}{cdim}
\DeclareMathOperator{\conv}{conv}
\DeclareMathOperator{\bd}{bd}
\begin{document}

\title{\normalfont\sffamily\large\bfseries On the represetation of finite convex geometries\\ with convex sets}
\author{\it J. Kincses\thanks{Research supported by the ERC Advanced Research Grant
no 267165 (DISCONV).}}
\date{}

\maketitle
\begin{abstract}
\noindent Very recently Richter and Rogers proved that any convex geometry 
can be represented by a family of convex polygons in the plane.
We shall generalize their construction and obtain
a wide variety of convex shapes for representing convex geometries.
We present an Erd\H os-Szekeres type obstruction, which answers a question of 
Cz\'edli negatively, that is general convex geometries
cannot be represented with ellipses in the plane.
Moreover, we shall prove that one cannot even bound
the number of common supporting lines of the pairs of the representing convex sets.
In higher dimensions we prove that all convex geometries can be represented 
with ellipsoids.
\end{abstract}

\section{Introduction}

Finite convex geometries were introduced by Edelman and Jamison in \cite{{edelmanjamison}}
and these structures were intensively studied in combinatorics and
in lattice theory (\cite{kortelovasz}, \cite{adarichevanation}).
They are an abstraction of geometric convexity in affine spaces
and lately various results were born about the geometric representation of convex geometries.
In this direction Kashiwabara, Nakamura and
Okamoto \cite{kashiwabara} obtained a basic result proving that
any convex geometry can be represented as a
''generalized convex shelling'' in some affine spaces, meaning that
there is an embedding in ${\mathbb R}^d$ so that each set in the geometry
is convex if and only if its embedding is convex with respect to a fixed
external set of points.

Cz\'edli proved in \cite{czedli} that convex geometries of convex dimension at most 2
may be represented as a set of circles in the plane.
This result initiated a new research area: representing
convex geometries with ''nice'' convex sets.
Very recently Richter and Rogers \cite{richterrogers} proved that any convex geometry 
can be represented by a family of convex polygons in the plane.
Cz\'edli, Kincses \cite{czedlikincses} represented convex geometries by almost-circles.
In Section 3 we generalize the construction of Richter and Rogers and we will obtain
a wide variety of convex shapes for representing convex geometries.

Adaricheva and Bolat \cite{adarichevabolat} found an obstruction for
representing general convex geometries with circles in the plane. In Section 4
we present an Erd\H os-Szekeres type obstruction, which answers a question of 
Cz\'edli \cite{czedli} negatively, that is general convex geometries
cannot be represented with ellipses.
Moreover, in Section 5 we shall prove that one cannot even bound
the number of common supporting lines of the pairs of the representing convex sets.
This will be obtained by giving an upper bound for the convex dimension of the geometry
in terms of the number of common supporting lines of the pairs.

Adaricheva and Cz\'edli \cite{adarichevabolat} raised the question
whether every finite convex geometry can be represented 
with balls in ${\mathbb R}^d$. In Section 6 we prove that this can be done
with ellipsoids.

\section{Preliminaries}

There are several equivalent definitions of finite convex geometries.
We shall use three of them.

\begin{df}
Let $E$ be a finite set. A convex geometry on $E$ is a collection
$\cal C$ of subsets (called convex sets) of $E$ with the following properties.
\begin{enumerate}
\item $\emptyset$ and $E$ are in $\cal C$,
\item If $X, Y\in{\cal C}$ then $X\cap Y\in {\cal C}$,
\item If $X\in {\cal C}\setminus \{E\}$ then there is $e\in E\setminus X$ so
$X\cup \{e\}\in {\cal C}$.
\end{enumerate}
\end{df}

\noindent Convex geometries ${\cal C}_1$ on $E_1$ and ${\cal C}_2$ on $E_2$ are isomorphic if there is a bijection $\varphi\colon E_1\to E_2$ such that
$\varphi(X)\in {\cal C}_2$ if and only if $X\in {\cal C}_1$.

In a convex geometry the convex closure of a set can be defined as the intersection
of the convex sets containing the given set. This operator has nice properties and
it serves an equivalent definition of convex geometries (\cite{adarichevanation}).

\begin{df}
A pair $(E,\Phi)$ is a convex geometry, if it satisfies the following properties:
\begin{enumerate}
\item $E$ is a set, called the set of points, and $\Phi\colon 2^E\to 2^E$ is a closure
operator, that is, for all $X\subseteq Y \subseteq E$, we have
$X\subseteq\Phi(X)\subseteq\Phi(Y) =\Phi(\Phi(Y ))$.
\item If $A\subseteq E$, $x, y\in E\setminus\Phi(A)$, and $\Phi(A\cup {x}) =\Phi(A\cup {y})$, then $x = y$.
(This is the so-called anti-exchange property.)
\item $\Phi(\emptyset) = \emptyset$.
\end{enumerate}
\end{df}

The next way to define a convex geometry on $E$ uses a collection of
orderings (\cite{edelmanjamison}),
denoted by $\preccurlyeq_i$. Throughout the paper, orderings are total and antisymmetric.

\begin{df}
We say that $(E,\cal C)$ is generated by a family $\{\preccurlyeq_i\}_{i=1}^m$
of orderings on $E$ if
\[{\cal C} = \{\emptyset\}\cup \{X\subseteq E\colon \forall y\not\in X,
\ \exists i\text{ so that }\forall x\in X,\ x\prec_i y\}\]
\end{df}

For a convex geometry, its {\it convex dimension}, denoted by $\cdim(E,{\cal C})$, is defined
as the smallest number of orderings which generate the geometry (see \cite{edelmansaks}).

The usual convex hull in ${\mathbb R}^d$ will be denoted by $\conv_{{\mathbb R}^d}$.
For a finite family ${\mathscr K}$ of compact convex sets  in ${\mathbb R}^d$
there is a natural closure operator derived from the affine convex hull
and which is denoted by $\conv_{\mathscr K}$. For any subset 
${\mathscr S}\subseteq {\mathscr K}$ let
\[\conv_{\mathscr K}({\mathscr S})=\{K\in{\mathscr K}\colon K\subseteq
\conv_{{\mathbb R}^d}(\cup_{S\in{\mathscr S}} S). \]
In general this closure will not determine a convex geometry (the critical point
is the anti-exchange property). In the planar case we present a sufficient (but
not necessary) condition which guarantees the anti-exchange property
and in a sense it is more general than that considered in \cite{adarichevabolat}.
For a compact convex set $K$ in the plane the line $l$ is a supporting line if
$K\cap l\not=\emptyset$ but $K$ is contained in one of the halfplanes of $l$.
The line $l$ is a common supporting line of the convex sets $K$ and $L$ if 
it is a supporting line of both $K$ and $L$ and both sets are in the same 
halfplane of $l$.

\begin{lem}\label{fincross}
Let ${\mathscr K}$ be a finite family of compact convex sets in the plane
and suppose that any two of them have at most finitely many common
supporting lines. Then $({\mathscr K},\conv_{\mathscr K})$ is a convex geometry.
\end{lem}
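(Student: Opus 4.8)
The plan is to verify that $({\mathscr K},\conv_{\mathscr K})$ satisfies the three conditions of the closure-operator definition of a convex geometry: that $\conv_{\mathscr K}$ is a closure operator, that $\conv_{\mathscr K}(\emptyset)=\emptyset$, and that it has the anti-exchange property. Writing $U({\mathscr S})=\conv_{{\mathbb R}^2}(\cup_{S\in{\mathscr S}}S)$, the first two are routine and do not use the hypothesis: extensivity and monotonicity are immediate, idempotence holds because every $K\in\conv_{\mathscr K}({\mathscr S})$ already lies in $U({\mathscr S})$, so $U(\conv_{\mathscr K}({\mathscr S}))\subseteq U({\mathscr S})$, and $\conv_{\mathscr K}(\emptyset)=\emptyset$ since the members of ${\mathscr K}$ are nonempty. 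Thus the entire content is the anti-exchange property, which is exactly where the finiteness hypothesis will be spent.

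First I would reduce anti-exchange to a statement about ordinary convex hulls. Assume ${\mathscr A}\subseteq{\mathscr K}$ and $K,L\in{\mathscr K}\setminus\conv_{\mathscr K}({\mathscr A})$ with $\conv_{\mathscr K}({\mathscr A}\cup\{K\})=\conv_{\mathscr K}({\mathscr A}\cup\{L\})$, and put $C=U({\mathscr A})$. Since $\conv_{{\mathbb R}^2}(\cup_{S\in{\mathscr A}\cup\{K\}}S)=\conv_{{\mathbb R}^2}(C\cup K)$, we have $\conv_{\mathscr K}({\mathscr A}\cup\{K\})=\{M\in{\mathscr K}\colon M\subseteq\conv_{{\mathbb R}^2}(C\cup K)\}$, and likewise for $L$. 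As $K$ lies in the left family it lies in the right one, giving $K\subseteq\conv_{{\mathbb R}^2}(C\cup L)$, and symmetrically $L\subseteq\conv_{{\mathbb R}^2}(C\cup K)$. Because $C$ is contained in both hulls, convexity forces $\conv_{{\mathbb R}^2}(C\cup K)=\conv_{{\mathbb R}^2}(C\cup L)=:P$; and $K\not\subseteq C$, which is what $K\notin\conv_{\mathscr K}({\mathscr A})$ says, yields $P\supsetneq C$.

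The main step is to deduce $K=L$, and I expect the real work here. I would argue by contradiction using support functions. Let $h_X$ denote the support function of a compact convex set $X$; then $h_P=\max(h_C,h_K)=\max(h_C,h_L)$. Set $D=\{u\in S^1\colon h_P(u)>h_C(u)\}$. Since $P\supsetneq C$ and a planar convex body is determined by its support function, $D$ is nonempty, and it is open since $h_P-h_C$ is continuous; hence $D$ is infinite. For each $u\in D$ the two equalities for $h_P$ force $h_K(u)=h_L(u)=h_P(u)$, so the supporting line $\ell_u$ of $P$ with outward normal $u$ meets both $K$ and $L$ while keeping each inside the halfplane $\{x\colon x\cdot u\le h_P(u)\}$; that is, $\ell_u$ is a common supporting line of $K$ and $L$. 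Distinct normals give distinct lines (the outward normal of a supporting line of a planar convex body with interior is unique; the lower-dimensional case is checked directly), so $K$ and $L$ have infinitely many common supporting lines. If $K\neq L$ this contradicts the hypothesis, and hence $K=L$.

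The conceptual point I would flag is that one should \emph{not} attempt to show that the boundaries of $K$ and $L$ agree everywhere: inside $C$ the two bodies may genuinely differ while still capping $C$ to the same $P$. The finiteness hypothesis is calibrated precisely to rule this out, because any two \emph{distinct} members of ${\mathscr K}$ that produce the same cap are automatically tangent to $P$ along the entire one-parameter family $\{\ell_u\}_{u\in D}$ of supporting lines over the exposed part of the cap.
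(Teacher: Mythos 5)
Your proof is correct and takes essentially the same route as the paper's: reduce everything to the anti-exchange property, then use support functions to exhibit an open (hence infinite) set of directions $u$ on which $h_K(u)=h_L(u)=h_P(u)$, so that $K$ and $L$ acquire infinitely many common supporting lines and the hypothesis forces $K=L$. The only difference is one of detail: you make explicit what the paper leaves implicit, namely the verification of the closure-operator axioms and the step from equality of the two closures $\conv_{\mathscr K}({\mathscr A}\cup\{K\})=\conv_{\mathscr K}({\mathscr A}\cup\{L\})$ to equality of the Euclidean hulls $\conv_{{\mathbb R}^2}(C\cup K)=\conv_{{\mathbb R}^2}(C\cup L)$.
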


For the proof we need some well known properties of the support function of
a convex set (see \cite{schneider}). For a compact convex set
$K\subseteq {\mathbb R}^d$ its support function is defined as
\[h(K,x)=\max_{y\in K}\langle x,y\rangle\] 
($\langle x,y\rangle$ is the scalar product of $x,y$). The properties we shall use:

(a) $h(K,x)$ is a continuous function on ${\mathbb R}^d$,

(b) for any convex sets $K,L$, $K\subseteq L$ if and only if 
$h(K,x)\le h(L,x)$ for all $|x|=1$,

(c) for any convex sets $\{K_i\}_{i=1}^n$ we have that
\[h(\conv_{{\mathbb R}^d}(\cup_i K_i),x)=\max_i\{ h(K_i,x)\}\]

(d) the line $\langle y,x\rangle=h(K,x)$ is a supporting line of $K$ for any $x\not=0$.

\begin{proof}[Proof of Lemma \ref{fincross}]
Obviously, it is enough to prove the anti-exchange property.
Suppose that ${\mathscr S}\subseteq{\mathscr K}$ and 
$K,L\in {\mathscr K}\setminus\conv_{\mathscr K}({\mathscr S})$
satisfy the conditions of the anti-exchange property.
Using properties (a), (b) and (c) above we have that
there exists an interval $I$ of the unit circle such that
for any $x\in I$
\begin{align*}
h(K,x)&>\max_{M\in{\mathscr S}}\{h(M,x)\}\\
\max\{h(K,x),\max_{M\in{\mathscr S}}\{h(M,x)\}\}&=
\max\{h(L,x),\max_{M\in{\mathscr S}}\{h(M,x)\}\}
\end{align*}
This implies that $h(K,x)=h(L,x)$ for $x\in I$. By property (d), the sets $K$ and $L$
have infinitely many common supporting lines so $K=L$.
\end{proof}
\begin{rems}
1. Lemma \ref{fincross} gives several classes of convex sets to represent
convex geometries. For example families of circles, ellipses, or in general,
nonsingular convex algebraic curves or convex curves with analytic support function
or polygons with distinct vertices all satisfy the condition of the lemma.

2. What we really used in the proof is that for any two of the convex sets the
supporting functions cannot agree on an open interval of the circle.
This is a weaker condition but we think it is more technical than that we stated.
\end{rems}
\section{Representation in the plane}\label{genplane}

Richter and Rogers \cite{richterrogers} proved that any convex geometry 
can be represented by a family of convex polygons in the plane.
In this section we shall modify their construction and
obtain a wide variety of convex shapes for representing convex geometries.

Consider a convex geometry $E$ defined by a collection of orderings
$\{\preccurlyeq_i\}_{i=1}^m$, $m\ge2$. For $x\in E$ let $j_i(x)$ be the
$x$'s place according to the $i^{\text{th}}$ ordering.
Choose the unit vectors $v_i=(\cos(2\pi i/m),\sin(2\pi i/m))$
pointing to the vertices of a regular $m$-gon $R_m$ and 
let $\varepsilon$ be a parameter of the construction where
$0<\varepsilon<|\sec(2\pi/m)|-1$.
Define the points
\[F_i^1(x)=\left(1+\frac{2j_i(x)-1}{2m}\varepsilon\right)v_i\qquad
F_i^2(x)=\left(1+\frac{2j_i(x)}{2m}\varepsilon\right)v_i.\]
The bounds on $\varepsilon$ guarantees that 
\begin{equation}\label{line}
\parbox{11cm}{for any point 
$P=F_i^1(x)$ or $F_i^2(x)$ the line through $P$ with normal $v_i$
contains all the points $F_k^1(y),F_k^2(y)$ $(k\ne i, y\ne x)$
on the same open halfplane of it.}
\end{equation}
This implies that for any $x\in E$ the points $\{F_i^1(x)\}_{i=1}^m$ 
and $\{F_i^2(x)\}_{i=1}^m$ are the vertices of the convex polygons
\[P^1(x)=\conv_{{\mathbb R}^2}\{F_1^1(x),\ldots,F_m^1(x)\}\quad\text{resp.}\quad 
P^2(x)=\conv_{{\mathbb R}^2}\{F_1^2(x),\ldots,F_m^2(x)\}.\]
Finally, for each $x\in E$ choose a compact convex set $K(x)$ such that
$P^1(x)\subseteq K(x)\subseteq P^2(x)$ and let ${\mathscr K}=\{K(x)\}_{x\in E}$
(see Fig \ref{approx}).
It is clear from the construction that
\begin{equation}\label{disjoint}
\parbox{11cm}{if $x\ne y$ then the segments $F_i^1(x)F_i^2(x)$ and
$F_i^1(y)F_i^2(y)$ are disjoint.}
\end{equation}

\begin{figure}[h]
\centering
\includegraphics[width=90mm]{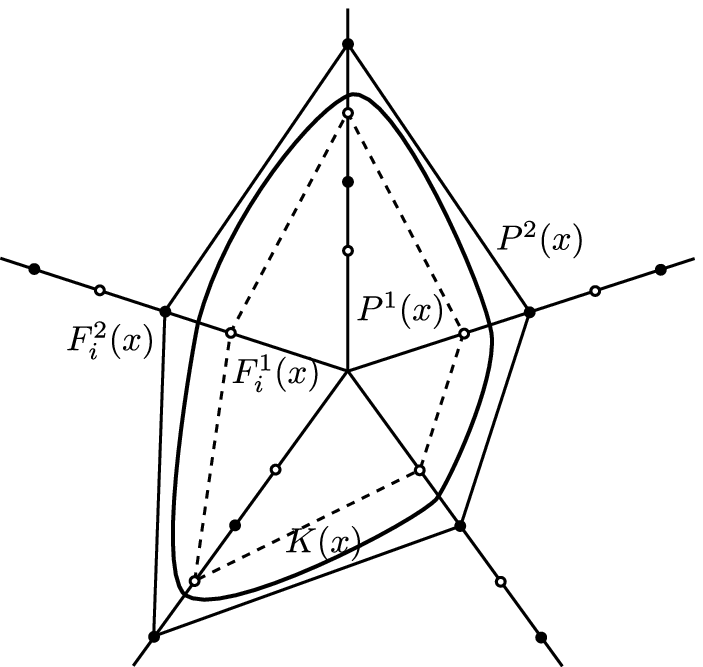}
\caption{}\label{approx}
\end{figure}

\begin{thm}
$({\mathscr K},\conv_{\mathscr K})$ is a finite convex geometry
and the map $\Phi\colon x\to K(x)$ is an isomorphism between the
convex geometries $(E,\{\preccurlyeq_i\}_{i=1}^m)$ and $({\mathscr K},\conv_{\mathscr K})$.
\end{thm}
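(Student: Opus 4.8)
The plan is to prove both assertions of the theorem at once by showing that $\Phi\colon x\mapsto K(x)$ transports the combinatorial closure of $(E,\{\preccurlyeq_i\})$ onto $\conv_{\mathscr K}$; since $(E,\{\preccurlyeq_i\})$ is a convex geometry, the three axioms then pass across the bijection $\Phi$, so $(\mathscr K,\conv_{\mathscr K})$ is a convex geometry and $\Phi$ is an isomorphism. I would deliberately \emph{not} route this through Lemma \ref{fincross}: the $K(x)$ are arbitrary bodies squeezed between $P^1(x)$ and $P^2(x)$, and two of them can share a boundary arc inside the large common region of the polygons, hence infinitely many common supporting lines, so the hypothesis of Lemma \ref{fincross} may fail. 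Instead everything is driven by the support functions in the $m$ distinguished directions $v_i$. The first step is purely combinatorial: writing $\operatorname{cl}$ for the convex closure of $(E,\{\preccurlyeq_i\})$, one checks that $y\in\operatorname{cl}(S)$ if and only if for every $i$ there is $z\in S$ with $y\preccurlyeq_i z$. Both inclusions are short: if the condition holds, no ordering can place $y$ strictly above all of $S$, so $y$ lies in every convex set containing $S$; conversely, if the condition fails for some $i$, then $\{w:\exists z\in S,\ w\preccurlyeq_i z\}$ is convex, contains $S$ and omits $y$.

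Next I would set up the support-function dictionary. Using (\ref{line}) the vertex $F_i^2(x)$ (resp. $F_i^1(x)$) maximizes $\langle v_i,\cdot\rangle$ over the vertices of $P^2(x)$ (resp. $P^1(x)$), so property (c) gives $h(P^1(x),v_i)=1+\tfrac{2j_i(x)-1}{2m}\varepsilon$ and $h(P^2(x),v_i)=1+\tfrac{j_i(x)}{m}\varepsilon$. Since $P^1(x)\subseteq K(x)\subseteq P^2(x)$, monotonicity (b) pins $h(K(x),v_i)$ inside the interval $[\,1+\tfrac{2j_i(x)-1}{2m}\varepsilon,\ 1+\tfrac{2j_i(x)}{2m}\varepsilon\,]$. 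These intervals, for consecutive integer values of $j_i(x)$, are disjoint and increasing, whence $h(K(x),v_i)\le h(K(z),v_i)\iff j_i(x)\le j_i(z)\iff x\preccurlyeq_i z$. In particular distinct $x,y$ differ in every ordering, so $h(K(x),v_i)\neq h(K(y),v_i)$ and $K(x)\neq K(y)$; this (equivalently, (\ref{disjoint})) gives injectivity of $\Phi$, and surjectivity onto $\mathscr K$ is by definition.

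The geometric heart is the claim that $K(x)\subseteq\conv_{{\mathbb R}^2}(\cup_{z\in S}K(z))$ if and only if $h(K(x),v_i)\le\max_{z\in S}h(K(z),v_i)$ for every $i$. Necessity is immediate from (c). For sufficiency, assume the $m$ inequalities; by the dictionary, for each $i$ there is $z_i\in S$ with $j_i(x)\le j_i(z_i)$. If $x\in S$ the inclusion is trivial, so assume $x\notin S$; then $z_i\neq x$ forces $j_i(x)<j_i(z_i)$, which by a one-line computation puts $F_i^2(x)$ on the segment $[0,F_i^1(z_i)]$. Now $F_i^1(z_i)\in K(z_i)\subseteq C:=\conv_{{\mathbb R}^2}(\cup_{z\in S}K(z))$, and $0\in C$ because $0\in\conv_{{\mathbb R}^2}\{F_1^1(z_1),\dots,F_m^1(z_m)\}$ — the latter since the equally spaced directions $v_1,\dots,v_m$ positively span the plane for $m\ge3$, so no closed halfplane through the origin contains all the $F_k^1(z_k)$. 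Convexity of $C$ then gives $F_i^2(x)\in C$ for every $i$, hence $K(x)\subseteq P^2(x)=\conv_{{\mathbb R}^2}\{F_i^2(x)\}_i\subseteq C$. Combining this claim, the dictionary, and the combinatorial description of $\operatorname{cl}$ yields $x\in\operatorname{cl}(S)\iff K(x)\in\conv_{\mathscr K}(\Phi(S))$, i.e. $\Phi(\operatorname{cl}(S))=\conv_{\mathscr K}(\Phi(S))$, which completes the argument.

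I expect the sufficiency direction of the geometric claim to be the main obstacle, for two reasons: reducing containment of the body $K(x)$ in $C$ to the inclusion of only the $m$ vertices $F_i^2(x)$, and proving that the origin lies in the relevant convex hull. The first is clean because $K(x)\subseteq P^2(x)$ and $P^2(x)$ is the convex hull of those vertices; the second is exactly where the regularity of the construction ($m\ge3$ and the equal spacing of the $v_i$, together with the bound on $\varepsilon$ used already in (\ref{line})) is indispensable, so I would state and verify the origin-containment as a separate lemma before assembling the isomorphism.
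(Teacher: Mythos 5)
Your proposal is correct and follows essentially the same route as the paper: your ``necessity'' direction (via property (c) at the directions $v_i$) is exactly the paper's separating-line argument through $F_i^1(z)$ with normal $v_i$, and your ``sufficiency'' direction is the paper's inclusion $K(z)\subseteq P^2(z)\subseteq \conv_{{\mathbb R}^2}(F_1^1(x_1),\ldots,F_m^1(x_m))$, just repackaged through the support-function dictionary and the closure-operator formulation. The one genuine addition is that you isolate and prove the origin-containment step ($0\in\conv_{{\mathbb R}^2}\{F_1^1(z_1),\ldots,F_m^1(z_m)\}$ for $m\ge 3$), which the paper passes over with ``from the construction easily follows''; that is a worthwhile clarification but not a different method.
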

\begin{proof}
We have to prove that $X\subseteq E$ is convex in $E$ if and only if $\Phi(X)$ is convex 
in ${\mathscr K}$.

Suppose that $X\subseteq E$ is not convex in $E$. Then there is a $z\not\in X$
such that for each $i$, there exists $x_i\in X$ with $z\preccurlyeq_i x_i$.
Thus $j_i(z)<j_i(x_i)$ and $|F_i^2(z)|<|F_i^1(x_i)|$ for all $i$. From the construction
easily follows that 
\[K(z)\subseteq P^2(z)\subseteq \conv_{{\mathbb R}^2}(F_1^1(x_1),\ldots,F_m^1(x_m))
\subseteq\conv_{{\mathbb R}^2}(K(x_1)\cup\ldots\cup K(x_m)).\]
But $\Phi(z)\not\in \Phi(X)$ and $K(z)\in\conv_{\mathscr K}\Phi(X)$ implies
that $\Phi(X)$ is not convex in ${\mathscr K}$.

Conversely, suppose that $X$ is convex in $E$ and let $z\not\in X$.
Then there is an $i$ such that for all $x\in X$ we have
$x\preccurlyeq_i z$ and therefore $j_i(x)<j_i(z)$.
Property \eqref{disjoint} implies that $|F_i^2(x)|<|F_i^1(z)|$ and from
\eqref{line} we have that the line through the point $F_i^1(z)$ with normal $v_i$
contains all the polygons $P^2(x)$ in one of its open halfplane.
This means that $F_i^1(z)\not\in\conv_{{\mathbb R}^2}(\cup_{x\in X}P^2(x))$
and using that $K(x)\subseteq P^2(x)$ we have that
$K(z)\not\in\conv_{\mathscr K}(\Phi(X))$ that is $\Phi(X)$ is convex in ${\mathscr K}$.
\end{proof}
\begin{rems}
1. In our construction each set $K(x)$ satisfies $R_m\subseteq K(x)\subseteq(1+\varepsilon)R_m$
where $R_m$ denotes the regular $m$-gon with vertices $v_i$.
This means that the sets $K(x)$ are arbitrary close to the regular $m$-gon
in the sense of Hausdorff metric (see \cite{schneider}).

The construction is sensitive for the number of orderings. Suppose that $E$ is 
defined by the orderings $\{\preccurlyeq_i\}_{i=1}^m$.
We can define a new set of orderings $\{\curlyeqprec_k\}_{k=1}^{sm}$
in which each of the original orderings occurs $s$ times, that is 
$\curlyeqprec_{lm+t}=\preccurlyeq_{l+1}$, $0\le l\le m-1$, $1\le t\le s$.
The new set obviously generates the same convex geometry but the 
sets $K(x)$ will be close to the regular $sm$-gon which is arbitrarily
close to the unit circle if $s$ is large enough.

2. The construction provides a great freedom in choosing the sets $K(x)$.
Using the approximation results of convex geometry (see \cite{gruber})
various ''nice'' convex shapes can be used.
From our point of view especially important classes are 

$\bullet$ convex sets with analytic support function,

$\bullet$ convex sets with algebraic support function,

\noindent 
because these classes satisfy the condition of Lemma \ref{fincross}.

Here we present a folklore example of a semi-algebraic set approximating
a convex polygon because its defining polynomial has degree equal to the
number of vertices of the polygon, which, in our construction, equal to
the convex dimension of the convex geometry. Let the convex polygon
given by the intersection of halfplanes
\[P=\cap_{i=1}^m\{a_{i}x+b_{i}x-c_{i}\ge 0\}\]
and define the set
\[K_P=\{(x,y)\in P\colon\prod_{i=1}^m(a_{i}x+b_{i}y-c_{i})\ge\alpha\}.\]
For sufficiently small positive $\alpha$ the set $K_P$ approximates $P$
and it is a strictly convex compact set with regular algebraic boundary curve
\[\bd K_P=\{(x,y)\in P\colon\prod_{i=1}^m(a_{i}x+b_{i}y-c_{i})=\alpha\}.\]
\end{rems}

\section{Erd\H os-Szekeres type obstructions}\label{sec:}

Cz\'edli proved in \cite{czedli} that convex geometries of convex dimension 2
may be represented as a set of circles in the plane. Very recently 
Adaricheva and Bolat \cite{adarichevabolat} found an obstruction for
representing any convex geometries with circles. In this section
we present an Erd\H os-Szekeres type obstruction for representing
convex geometries with circles or with ellipses
(in the case of circles it is different from the obstruction
of Adaricheva and Bolat).
These are simple consequences of the following results 
of Pach, T\'oth and Dobbins, Holmsen, Hubard.
We say that a family of convex sets are in convex position if
neither of them is in the convex hull of the others.

\begin{thm}[\cite{pachtoth}]\label{ptsegments}
There is an infinite family of ellipses in the plane such
that any three of them are in convex position but no four are.
\end{thm}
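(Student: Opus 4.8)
The plan is to convert ``being in convex position'' into an analytic statement about support functions, and then to exhibit a self-similar (spiral) family of ellipses for which that statement reduces to finitely many checkable patterns. By properties (b) and (c), for compact convex sets one has
\[
E_j\subseteq\conv_{\mathbb{R}^2}(E_i\cup E_k)\iff h(E_j,x)\le\max\{h(E_i,x),h(E_k,x)\}\ \text{ for all }|x|=1,
\]
and similarly for unions of three sets. Writing $f_i=h(E_i,\cdot)$ for the support function of $E_i$ on the unit circle, a subfamily is in convex position exactly when each of its members strictly exceeds the upper envelope of the others in some direction. So I must produce ellipse support functions $f_1,f_2,\dots$ satisfying (A) for every three indices each of the three is the strict maximum somewhere, and (B) for every four indices at least one $f_m$ lies everywhere below the maximum of the other three. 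The whole problem thus becomes one of prescribing the upper-envelope combinatorics of a family of ellipse support functions on the circle.

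To make an infinite family tractable I would impose self-similarity. Fix a spiral similarity $\sigma$ (rotation by an angle $\phi$ composed with a homothety of ratio $\lambda>1$ centred at the origin) and one ellipse $E_0$, and set $E_i=\sigma^i(E_0)$, so the $E_i$ march outward along a spiral arm. Since convex position and the domination relation are invariant under similarities, the combinatorial type of $\{E_{i_1},\dots,E_{i_r}\}$ depends only on the gaps $i_2-i_1,\dots,i_r-i_{r-1}$. This collapses the infinitely many instances of (A) and (B) into statements indexed by gap vectors, which I would settle by checking a bounded range of gaps and then invoking a limiting argument as one gap tends to infinity: in that limit the largest ellipse is enormous and the configuration degenerates to a lower one, so (B) for quadruples should reduce to a condition compatible with (A) for triples.

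For (A) I expect the spiral to give each of three ellipses a nonempty ``visibility arc.'' Because the $E_i$ are pushed outward rather than nested, the innermost member pokes out toward the origin, the outermost pokes out along the outward spiral direction where its size dominates, and the middle member survives on an intermediate arc; choosing $E_0$ sufficiently eccentric and tuning $\phi$ keeps all three arcs nonempty. Verifying this amounts to locating, for each gap pattern, a direction where each $f_i$ beats the other two, which the self-similarity reduces to a finite computation together with the $d\to\infty$ degeneration.

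The hard part will be condition (B). For point sets it is outright \emph{impossible} by the Erd\H os--Szekeres convex-quadrilateral theorem, so the construction must genuinely exploit that the $E_i$ have width; the mechanism I would prove is that in every quadruple one member (I expect a middle one in spiral order) has its \emph{entire} circle of directions covered by the domination regions of the other three, i.e.\ $f_q\le\max\{f_p,f_r,f_s\}$ identically. To control these regions I would use that the difference of two ellipse support functions changes sign a uniformly bounded number of times (this is the support-function shadow of the fact that two ellipses have only boundedly many common supporting lines), so each pairwise upper envelope is built from boundedly many arcs; the task then reduces to arranging that the arcs on which $f_p$, $f_r$, $f_s$ respectively dominate $f_q$ cover the whole circle, while no \emph{two} of them already cover it (which would violate (A)). Carrying this out is a delicate simultaneous tuning of $\phi$, $\lambda$ and the eccentricity of $E_0$, extended from small gaps to all gap vectors by the self-similarity and the $d\to\infty$ limiting reductions, and it is here that I anticipate the real work of the proof to lie.
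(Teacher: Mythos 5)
First, a point of orientation: the paper contains no proof of Theorem~\ref{ptsegments}; it is quoted from Pach and T\'oth \cite{pachtoth}. So your attempt must stand on its own, and unfortunately it cannot: the self-similar spiral family on which your whole plan rests provably fails condition (A), for \emph{every} choice of the parameters $\phi$, $\lambda>1$ and the base ellipse $E_0$. Indeed, suppose first $\phi\in 2\pi\mathbb{Q}$ and pick $q$ with $q\phi\equiv 0\pmod{2\pi}$. Then $E_q=\lambda^qE_0$ and $E_{2q}=\lambda^{2q}E_0$ are homothets of $E_0$ from the origin, and for every $x\in E_0$ we have
\[
\lambda^q x=t\,x+(1-t)\,\lambda^{2q}x,\qquad t=\frac{\lambda^{2q}-\lambda^q}{\lambda^{2q}-1}\in(0,1),
\]
so $E_q\subseteq\conv_{{\mathbb R}^2}(E_0\cup E_{2q})$ and the triple $\{E_0,E_q,E_{2q}\}$ is not in convex position. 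If instead $\phi\notin2\pi\mathbb{Q}$, the angles $d\phi$ are dense modulo $2\pi$, so there are arbitrarily large $d$ with $d\phi$ within a prescribed $\epsilon$ of $\pi\pmod{2\pi}$. Consider the triple $\{E_0,E_d,E_{2d}\}$ (assume the centre $c_0$ of $E_0$ is not the origin; if the origin lies in $E_0$ or is its centre, then $E_d\supseteq E_0$ for large $d$ and (A) fails even more simply). The centres of $E_d$ and $E_{2d}$ lie in nearly opposite directions from the origin, at distances $\lambda^d|c_0|$ and $\lambda^{2d}|c_0|$, and each of these ellipses contains a disk of radius $\lambda^d\rho$ about its centre, where $\rho>0$ is the inradius of $E_0$. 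Hence $\conv_{{\mathbb R}^2}(E_d\cup E_{2d})$ contains the $\lambda^d\rho$-neighbourhood of the segment joining the two centres, and that segment passes within distance $2\epsilon\lambda^d|c_0|$ of the origin. Taking $\epsilon<\rho/(4|c_0|)$, this hull contains the disk of radius $\lambda^d\rho/2$ about the origin, which swallows $E_0$ once $d$ is large. So again a triple fails (A). Note that this happens exactly at the step you left unexamined: the ``limiting argument as one gap tends to infinity'' does not reduce quadruples to triples---it destroys the convex position of triples. No tuning of eccentricity helps, because self-similarity forces the ratio (inradius)/(distance to the spiral centre) to be the same constant $\rho/|c_0|$ at every scale.

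Beyond this fatal obstruction, the proposal is also incomplete on its own terms: condition (B), which you correctly identify as the heart of the matter (and which for points is impossible by Erd\H os--Szekeres), is never established; it is deferred to ``delicate simultaneous tuning.'' It is worth seeing why the genuine construction of Pach and T\'oth is structurally the opposite of yours: its members are, in essence, very thin ellipses obtained by slightly fattening a family of long, pairwise \emph{crossing} segments that all pass through a common central region. That shared core is what makes the hull of any three members cover the fourth (giving (B)), while the thinness and the slowly turning directions let each member of a triple poke out (giving (A)); fattening is harmless because the required containments hold with room to spare. A similarity-expanding spiral can never reproduce this feature---its members live at exponentially separated scales and positions and cannot all pass through one bounded region---which is precisely why your approach runs into the contradiction above rather than into mere technical difficulty.
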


\begin{thm}[\cite{holmsenatall}]\label{holmsencross}
For all integers $n > k \ge 1$, there exists a minimal positive integer
$h_k(n)$ such that the following holds: Any family of at least $h_k(n)$ convex bodies in
the plane such that any two have at most $2k$ common supporting lines and any
$m_k$ are in convex position contains $n$ members which are in convex position, where
$m_1=3$, $m_2 = 4$, and $m_k=5$ for all $k\ge 3$.
\end{thm}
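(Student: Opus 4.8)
The plan is to read Theorem \ref{holmsencross} as a generalized Erd\H os--Szekeres theorem and to prove it by a Ramsey-type argument: encode the family by a bounded-complexity combinatorial invariant, extract a homogeneous subfamily by hypergraph Ramsey, and then show that homogeneity together with the local hypothesis forces global convex position. The hypotheses factor naturally into a non-degeneracy assumption (``any $m_k$ are in convex position'') playing the role that ``no three collinear'' plays for point sets, and a complexity bound (``at most $2k$ common supporting lines'') that keeps the combinatorial palette finite.

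First I would fix a generic sweeping direction and order the bodies by it. For an ordered tuple the relevant data is the relative position of the common supporting lines of its pairs; since any two bodies share at most $2k$ of them, each pair realizes one of boundedly many tangency patterns, so every ordered $m_k$-tuple receives one of finitely many \emph{types}, the number depending only on $k$. The assumption that every $m_k$ of the bodies are in convex position is exactly what guarantees that these types are well defined and non-degenerate; for $k=1$ three bodies being in convex position is the direct analogue of three points not being collinear, which is the reason $m_1=3$.

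Second I would apply Ramsey's theorem for $m_k$-uniform hypergraphs to this finite coloring, taking $h_k(n)$ to be the corresponding Ramsey number, so that a family of that size yields a subfamily of $n$ bodies all of whose $m_k$-tuples share a single type. The core of the proof is then the geometric lemma that a type-homogeneous subfamily is automatically in convex position, the analogue of the statement that a point set all of whose triples have the same orientation is a convex polygon. One must show that the bounded tangency structure makes convex position a property certified on subfamilies of size $m_k$, and that the homogeneous type is forced to be the ``convex'' one rather than a nested or crossing one; the thresholds $m_1=3$, $m_2=4$, $m_k=5$ reflect that admitting more common supporting lines requires a larger local certificate to pin down the cyclic order of tangents and rule out degenerate interleavings.

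I expect the main obstacle to be precisely this last step: converting the bound $2k$ into the sharp local certificate size $m_k$ and proving that homogeneity propagates to global convex position. I would attack it by induction along the sweep, maintaining a cup/cap-type invariant (a subfamily whose consecutive common tangents turn consistently in one sense) and carrying out a case analysis of how the at most $2k$ common supporting lines of a newly inserted body with each earlier body can interleave. In parallel, to obtain quantitatively better bounds on $h_k(n)$ than the Ramsey number gives, I would try a direct cups-and-caps argument in the spirit of the classical Erd\H os--Szekeres estimate, with the Ramsey route retained as a fallback that at least establishes the finiteness of $h_k(n)$.
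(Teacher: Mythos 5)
First, a point of orientation: the paper does not prove Theorem~\ref{holmsencross} at all. It is quoted verbatim from Dobbins, Holmsen and Hubard \cite{holmsenatall} and used as a black box to derive the corollaries in Section~4, so there is no internal proof to compare your attempt against; what you have written is a plan for reproving a substantial external theorem.

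As such a plan, it has a genuine gap, and you have in fact located it yourself: everything rests on the lemma that a type-homogeneous subfamily is automatically in convex position, and this is precisely the hard core of the theorem, not a step one can defer. The analogy with point sets is misleading here. For points, ``all triples positively oriented'' implies convex position by an easy local-to-global argument; for convex bodies no such cheap propagation exists. Indeed, the paper's own Theorem~\ref{holmsenset} (also from \cite{holmsenatall}) exhibits arbitrarily large families in which every pair has exactly six common supporting lines and \emph{every four} members are in convex position, yet no five are: local convex position on small tuples, however homogeneous, does not by itself propagate. So any correct argument must use the bound of $2k$ common supporting lines in an essential quantitative way to manufacture the local-to-global step, and your proposal leaves exactly this to ``induction along the sweep'' and ``a case analysis of interleavings'' without exhibiting the invariant that survives the induction. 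Two further soft spots: your coloring is not actually defined (``relative position of the common supporting lines'' must be turned into a finite, well-defined set of types, which already requires choosing and justifying a combinatorial encoding, e.g.\ via crossings of support functions as in Section~5 of this paper), and the claim that the hypothesis ``any $m_k$ in convex position'' is what makes the types well defined is asserted rather than argued. For reference, the actual proof in \cite{holmsenatall} does not go by hypergraph Ramsey on tuple types; it proceeds through their machinery of \emph{regular systems of paths}, reducing the tangency structure of the bodies to systems of curves with bounded pairwise crossings and proving an Erd\H{o}s--Szekeres theorem at that level, which is where the thresholds $m_1=3$, $m_2=4$, $m_k=5$ genuinely come from. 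Your outline is a reasonable first sketch, but as it stands the theorem-shaped hole in the middle is the theorem itself.
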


\begin{thm}[\cite{holmsenatall}]\label{holmsenset}
There exist arbitrarily large families of convex
sets in the plane such that

$\bullet$ any two members have precisely six common supporting lines,

$\bullet$ any four members are in convex position, and

$\bullet$ no five members are in convex position.

\end{thm}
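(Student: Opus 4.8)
The plan is to translate both geometric conditions into statements about support functions and their upper envelope, and then to build the family by perturbing a disk in a controlled way. For a compact convex set $K$ write $h_K(\theta)=h(K,(\cos\theta,\sin\theta))$ for its support function on the unit circle $S^1$. By property (d) a common supporting line with both sets on the same side has a well-defined outer normal $u=(\cos\theta,\sin\theta)$, and such a line exists in direction $\theta$ precisely when $h_K(\theta)=h_L(\theta)$, since the two parallel supporting lines then coincide and both sets lie in $\{\langle x,u\rangle\le h_K(\theta)\}$. Hence \emph{the number of common supporting lines of $K$ and $L$ equals the number of zeros of $f=h_K-h_L$ on $S^1$}, and demanding exactly six such lines is the same as demanding that $f$ have exactly six transversal zeros. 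The second dictionary entry concerns convex position: since $h(\conv_{{\mathbb R}^2}(\cup_j K_j),\cdot)=\max_j h_{K_j}$ by property (c), a member $K_i$ of a subfamily fails to lie in the convex hull of the others exactly when $h_{K_i}(\theta)>\max_{j\ne i}h_{K_j}(\theta)$ for some $\theta$; that is, a subfamily is in convex position iff each of its support functions rises strictly above the others on some arc of $S^1$.

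Because convex position is inherited by subfamilies, the desired family is equivalent to a simplicial condition: the subfamilies in convex position should be exactly those of size at most four. So the target is $N$ support functions $h_1,\dots,h_N$ on $S^1$, each obeying the convexity constraint $h_i+h_i''\ge0$ and each pairwise difference having exactly six transversal zeros, such that in every four of them each function rises above the other three somewhere, while in every five at least one stays below the pointwise maximum of the other four.

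To lock the crossing number at six I would take the bodies to be small perturbations of the unit disk dominated by a third angular harmonic, $h_i(\theta)=1+\varepsilon\,g_i(\theta)$, where the $g_i$ are chosen so that each difference $g_i-g_j$ is a trigonometric polynomial of degree three whose leading part is a genuine third harmonic $A_{ij}\cos(3\theta-\psi_{ij})$ with $A_{ij}$ bounded away from $0$. A pure third harmonic has exactly six simple zeros, so for small $\varepsilon$ each $h_i+h_i''\ge0$ holds (convexity) and each $h_i-h_j$ still has exactly six transversal zeros; this fixes $k=3$ and is the direct analogue, one harmonic higher, of using ellipses — whose differences are first harmonics with two zeros — to realize the Pach--T\'oth family of Theorem \ref{ptsegments}. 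What remains is to choose the perturbations $g_i$, mixing third-harmonic phases with small lower-order (translation and scaling) terms, so as to realize the envelope combinatorics of the previous paragraph.

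The hard part will be this last step, and in particular the ``no five'' condition. Equal-amplitude third harmonics will not do: if all $g_i=\cos(3(\theta-\phi_i))$ then in each direction the envelope is won by the nearest phase, so \emph{every} body surfaces and arbitrarily many lie in convex position. One must therefore break the symmetry with carefully tuned amplitudes and translations so that every quintuple has a buried member while every quadruple is fully exposed, and verify this uniformly as $N\to\infty$. This is the extremal heart of the theorem, and it is precisely the feature separating families of convex bodies with six common tangents from point sets, for which Erd\H os--Szekeres forbids any bound on the size of convex-position subsets. I expect the cleanest route is a cyclic construction in which the order-six (Davenport--Schinzel) structure of the upper envelope of the perturbed disks is used to show that no five support functions can all reach the top, mirroring the way two crossings force ``no four'' in the ellipse case.
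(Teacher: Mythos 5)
Your support-function dictionary is correct and is consistent with how the paper itself handles these notions (in Lemma \ref{fincross} and in Section 5): with the paper's definition of a common supporting line (both sets in the same halfplane), such lines correspond exactly to directions $\theta$ with $h_K(\theta)=h_L(\theta)$, and a subfamily is in convex position iff each member's support function exceeds the pointwise maximum of the others on some arc. Your device of perturbing the unit disk by perturbations whose pairwise differences are dominated by a third harmonic is also a plausible way to pin the number of common supporting lines at six while keeping convexity. But everything after that is a declaration of intent, not a proof: you explicitly defer ``the hard part,'' namely choosing the $g_i$ so that every quadruple is in convex position while every quintuple contains a buried member, uniformly for arbitrarily large families. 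That step is the entire content of the theorem; without it you have only produced families with six pairwise common tangents, and your own observation that equal-amplitude third harmonics yield arbitrarily large convex-position subfamilies shows that the two requirements genuinely pull against each other. Appealing to the Davenport--Schinzel structure of the upper envelope cannot close this gap by itself: ``no five in convex position'' is a statement about \emph{every} five-element subfamily, not about the global complexity of one envelope, so some explicit combinatorial scheme is indispensable and none is given.

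You should also be aware that the paper contains no proof of this statement to compare against: it is quoted verbatim from Dobbins, Holmsen and Hubard \cite{holmsenatall}, where the construction rests on their theory of regular systems of paths and occupies a substantial part of that paper. Judged on its own, your attempt is an honest reduction to support-function language together with an unproven extremal claim; the missing construction is a genuine gap, not a routine verification.
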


Tthese results immediately gives the following

\begin{cor}
{\rm a)} There are convex geometries not representable by circles. 

{\rm b)} There are convex geometries not representable by ellipses. 
\end{cor}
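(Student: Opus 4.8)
The plan is to derive both parts from the Erd\H os--Szekeres statement of Theorem \ref{holmsencross}, feeding it the extremal families of Theorem \ref{ptsegments} for (a) and of Theorem \ref{holmsenset} for (b). The conceptual pivot I would establish first is that being in convex position is an isomorphism invariant of $({\mathscr K},\conv_{\mathscr K})$: a subfamily $\{K_1,\dots,K_r\}$ is in convex position precisely when no $K_j$ belongs to $\conv_{\mathscr K}(\{K_i: i\ne j\})$, a condition expressed solely through the closure operator. Consequently, if two families of compact convex sets generate isomorphic convex geometries, then a subset is in convex position in one exactly when the corresponding subset is in the other.

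For part (a) I would record that two circles have at most two common supporting lines---their outer common tangents---so by Lemma \ref{fincross} every finite family of circles is a convex geometry to which Theorem \ref{holmsencross} applies with $k=1$ and $m_1=3$. Then I would take $N=h_1(4)$ of the ellipses furnished by Theorem \ref{ptsegments}; as ellipses have analytic support functions, Lemma \ref{fincross} turns them into a convex geometry $\mathcal G$ in which every three members are in convex position while no four are. Were $\mathcal G$ representable by circles, the invariance above would yield $N$ circles exhibiting the same pattern, in particular with every three in convex position. Theorem \ref{holmsencross} (with $k=1$, $n=4$) would then force four of them into convex position, contradicting the pattern transported from $\mathcal G$; hence $\mathcal G$ is not representable by circles.

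Part (b) runs in exactly the same way with $k=2$. Two ellipses, regarded as conics, have at most four common tangents by B\'ezout in the dual plane, hence at most four common supporting lines, so every finite family of ellipses satisfies the hypotheses of Theorem \ref{holmsencross} with $k=2$ and $m_2=4$. I would then take $N=h_2(5)$ of the convex sets supplied by Theorem \ref{holmsenset}; their pairwise six common supporting lines are finitely many, so Lemma \ref{fincross} makes them a convex geometry $\mathcal H$ with every four members in convex position and no five. A representation by ellipses would reproduce this pattern on $N$ ellipses, whereupon Theorem \ref{holmsencross} (with $k=2$, $n=5$) would exhibit five ellipses in convex position---again a contradiction.

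The one step demanding care is the invariance claim: I would have to verify against the exact definition of $\conv_{\mathscr K}$ that the geometric condition $K_j\subseteq\conv_{{\mathbb R}^2}(\bigcup_{i\ne j}K_i)$ is faithfully encoded by membership in $\conv_{\mathscr K}(\{K_i:i\ne j\})$, and that the representing isomorphism carries closed sets to closed sets in both directions, so that convex position is genuinely transported. Everything else reduces to choosing the threshold $h_k(n)$ and matching the supporting-line count of circles ($k=1$) and ellipses ($k=2$) to the parameter of Theorem \ref{holmsencross}; these counts are classical and need no fresh computation.
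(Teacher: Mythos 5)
Your proposal is correct and follows essentially the same route as the paper: both parts apply Theorem \ref{holmsencross} with $k=1$, $n=4$ to the family of Theorem \ref{ptsegments} and with $k=2$, $n=5$ to the family of Theorem \ref{holmsenset}, using the tangent-line counts for circles and ellipses. The only difference is one of exposition---you make explicit the step the paper leaves implicit, namely that convex position is expressible via the closure operator $\conv_{\mathscr K}$ and is therefore transported by any isomorphism of the represented geometries---which is a worthwhile clarification but not a different argument.
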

\begin{proof}
a) consider the convex geometry defined by at least $h_1(4)$ ellipses of Theorem \ref{ptsegments}. By Theorem \ref{holmsencross}, this geometry cannot be represented
with circles in the plane.

b) Consider the convex geometry defined by at least $h_2(5)$ sets of Theorem \ref{holmsenset}. It is well known that two ellipses can have at most four common supporting lines, so
by Theorem \ref{holmsencross}, this geometry cannot be represented with ellipses in the plane.
\end{proof}

\section{Convex dimension and common supporting lines}

In this section we shall bound the convex dimension of a family of convex sets
by the number of common supporting lines of the pairs.
This will give an obstruction for representing convex geometries with 
families of convex sets with bounded number of common supporting lines.

Let ${\mathscr K}=\{K_1,\ldots,K_n\}$ be a family of compact convex sets
in the plane such that any two of them have at most $k$ common supporting lines.
If for a unit vector $x$ the values of the support functions $\{h(K_i,x)\}_{i=1}^n$
are all distinct then they define an ordering of the sets via
$K_i\prec_x K_j$ iff $h(K_i,x)<h(K_j,x)$. In this case we say that $x$ is regular.

\begin{figure}[h]
\centering
\includegraphics{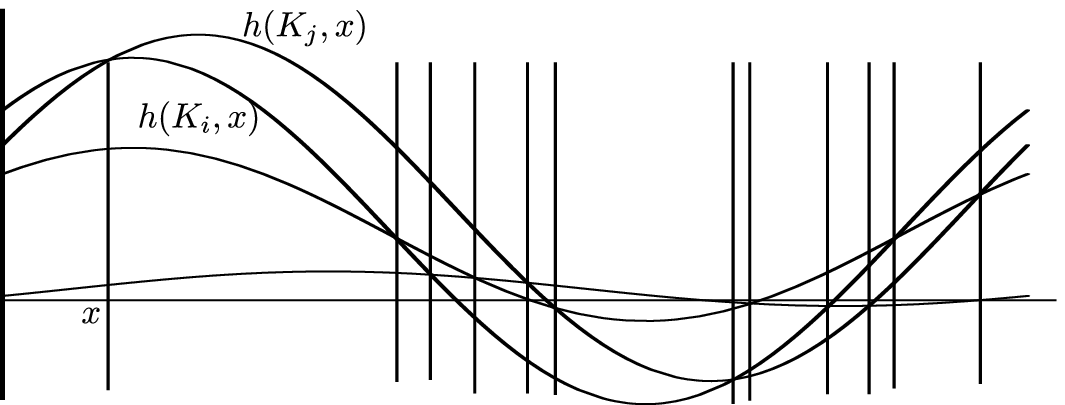}
\caption{}\label{crossing}
\end{figure}

If $x$ is not a regular unit vector then there are $i\not= j$ with $h(K_i,x)=h(K_j,x)$
which gives a common supporting line of $K_i$ and $K_j$.
The non regular vectors correspond to the crossings of the graphs
of the support functions (see Fig. \ref{crossing}).
By the condition, there are altogether at most $k{n\choose2}$
non regular unit vectors. These vectors divide the unit circle into
intervals and for the points of each interval the induced ordering is the same
and these orderings determine the convex geometry  $({\mathscr K},\conv_{\mathscr K})$.
For, let $K\in {\mathscr K}$ and ${\mathscr S}\subseteq {\mathscr K}$.
Then $K\not\in \conv_{\mathscr K}{\mathscr S}$ iff there is an interval $I$
of the unit circle such that $\max_{S\in{\mathscr S}}\{h(S,x)\}<h(K,x)$
for all $x\in I$ and clearly $I$ contains a regular vector. 
Thus we obtained the following result.

\begin{lem}\label{kcrossbound}
If ${\mathscr K}$ is a family of compact convex sets
in the plane such that any two of them have at most $k$ common supporting lines then
\[\cdim({\mathscr K},\conv_{\mathscr K})\le k{n\choose2}\]
\end{lem}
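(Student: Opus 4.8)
The plan is to show that the orderings induced on the regular intervals of the unit circle generate exactly the convex geometry $(\mathscr{K},\conv_{\mathscr{K}})$, and then to count how many distinct such orderings can occur. The counting bound will come directly from the hypothesis that any two sets share at most $k$ common supporting lines.

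First I would set up the support functions $h(K_i,x)$ as functions on the unit circle and observe, using property (d), that a coincidence $h(K_i,x)=h(K_j,x)$ with both sets in the same halfplane corresponds precisely to a common supporting line of $K_i$ and $K_j$ in direction $x$. Since each pair contributes at most $k$ such coincidence directions, the total number of non-regular unit vectors is at most $k\binom{n}{2}$. These finitely many points cut the circle into at most $k\binom{n}{2}$ open arcs, and on each arc the relative order of the values $h(K_i,x)$ is constant, so each arc determines a single total ordering $\preccurlyeq_I$ of $\mathscr{K}$ via $K_i\prec_x K_j \iff h(K_i,x)<h(K_j,x)$.

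Next I would verify that this family of orderings, one per arc, generates the geometry in the sense of the ordering definition. The key equivalence is: $K\notin\conv_{\mathscr{K}}(\mathscr{S})$ if and only if there is a direction $x$ with $\max_{S\in\mathscr{S}}h(S,x)<h(K,x)$, which by properties (b) and (c) says $K\not\subseteq\conv_{\mathbb{R}^2}(\cup_{S}S)$. Such a strict inequality holds on a whole open neighborhood of $x$ by continuity (property (a)), so that neighborhood meets some regular arc $I$, and on that arc $K$ exceeds every member of $\mathscr{S}$. This is exactly the condition that $\mathscr{S}$, in the ordering $\preccurlyeq_I$, fails to dominate $K$; matching this against the definition of a geometry generated by orderings shows the arc-orderings generate $\conv_{\mathscr{K}}$. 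Hence the convex dimension, being the minimum number of generating orderings, is at most the number of arcs, giving $\cdim(\mathscr{K},\conv_{\mathscr{K}})\le k\binom{n}{2}$.

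I expect the main obstacle to be the careful reconciliation between the \emph{non-strict} closure condition $K\subseteq\conv_{\mathbb{R}^2}(\cup S)$ and the \emph{strict} inequality on support functions needed to place a witnessing direction inside an open regular arc: one must argue that when $K$ is genuinely not contained in the hull, the strict gap $h(K,x)>\max_S h(S,x)$ persists on an open set of directions and therefore can be realized at a regular vector, avoiding the finitely many non-regular directions where the ordering is ambiguous. The continuity of the support functions is what makes this work, but stating the equivalence cleanly — and checking that the generated collection matches the ordering-definition of a convex geometry rather than merely containing or being contained in it — is the step that requires the most care.
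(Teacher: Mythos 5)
Your proposal is correct and takes essentially the same approach as the paper: count the non-regular directions (at most $k\binom{n}{2}$, one common supporting line per coincidence of support functions), cut the unit circle into arcs on which the induced ordering of $\{h(K_i,x)\}$ is constant, and show these arc-orderings generate $({\mathscr K},\conv_{\mathscr K})$ in the sense of the ordering definition. The continuity step you single out as delicate is exactly what the paper compresses into the remark that the interval of strict inequality ``clearly contains a regular vector,'' so your write-up just makes the same argument more explicit.
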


Now we shall construct a convex geometry with large convex dimension.
For this we shall use a result of Edelman and Saks \cite{edelmansaks} which express the 
convex dimension in terms of copoints.
In a convex geometry $(E,\cal C)$ a convex set $A\subseteq E$ is a copoint if there exists
a point $x\in E$ such that $x\not\in A$ and $A$ is
inclusion-maximal among all convex subsets of $E$ that do not contain $x$.
Let $M({\cal C})$ be the set of the copoints of $E$ and $w(M({\cal C}))$ 
be the maximal number of inclusion-incomparable elements of $M({\cal C})$.

\begin{thm}[\cite{edelmansaks}]\label{cdimcopoint} $\cdim(E,{\cal C})=w(M({\cal C}))$
\end{thm}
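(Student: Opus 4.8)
The plan is to prove both inequalities $\cdim(E,\mathcal C)\le w(M(\mathcal C))$ and $\cdim(E,\mathcal C)\ge w(M(\mathcal C))$ by relating generating orderings to chains and antichains in the copoint poset $M(\mathcal C)$. First I would record two standard consequences of the anti-exchange property: every copoint $A$ is meet-irreducible in the lattice $\mathcal C$, so it has a unique cover obtained by adding a single point $p(A)$ (its \emph{attachment point}), and $p(A)$ is the unique element for which $A$ is maximal among convex sets avoiding it. I would also note that any generating family consists automatically of shelling orderings, i.e. each initial segment $\{z:z\prec_i y\}$ is convex: using ordering $i$ itself shows such a segment satisfies the defining condition. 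The engine of the whole argument is the reduction lemma: \emph{a family of orderings generates $\mathcal C$ if and only if every copoint occurs as an initial segment of one of them.}

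For the lower bound, fix a generating family $\{\preccurlyeq_i\}_{i=1}^m$ with $m=\cdim(E,\mathcal C)$ and an antichain $A_1,\dots,A_t$ in $M(\mathcal C)$ of maximum size $t=w(M(\mathcal C))$, with attachment points $y_j=p(A_j)$. Since the family generates $\mathcal C$ and $y_j\notin A_j$, there is an index $i_j$ with $x\prec_{i_j}y_j$ for all $x\in A_j$; the initial segment $\{z:z\prec_{i_j}y_j\}$ is convex, avoids $y_j$, and contains $A_j$, so by maximality it equals $A_j$. If two distinct copoints of the antichain used the same ordering, their initial segments in a single total order would be nested, contradicting incomparability. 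Hence $j\mapsto i_j$ is injective and $t\le m$.

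For the upper bound I would invoke the classical theorem of Dilworth to partition $M(\mathcal C)$ into $w(M(\mathcal C))$ chains. Each chain $A_1\subsetneq\cdots\subsetneq A_s$ is a chain in the lattice $\mathcal C$; since covers in a convex geometry add exactly one point, I can refine it to a maximal chain $\emptyset=X_0\lessdot X_1\lessdot\cdots\lessdot X_n=E$ passing through every $A_r$, and this maximal chain is precisely a shelling ordering whose initial segments include all the $A_r$. The resulting $w(M(\mathcal C))$ orderings realize every copoint as an initial segment, so by the reduction they generate $\mathcal C$, giving $\cdim(E,\mathcal C)\le w(M(\mathcal C))$.

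The main obstacle is the reduction lemma, specifically its sufficiency half. To show that realizing every copoint as an initial segment forces the orderings to generate all of $\mathcal C$, I would use that every convex set $X\ne E$ is the intersection of the copoints containing it: for $y\notin X$, a maximal convex set containing $X$ and avoiding $y$ is a copoint with attachment point $y$, and the ordering realizing it as an initial segment places all of $X$ before $y$. Conversely, for shelling orderings the generated sets are intersections of convex initial segments and hence convex, so the generated family is contained in $\mathcal C$. Verifying the uniqueness of attachment points and the single-point covers is where the anti-exchange property does the real work; once these are in hand, Dilworth's theorem supplies the clean combinatorial bridge in both directions.
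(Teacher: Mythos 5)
The paper offers no proof of this theorem: it is quoted from Edelman and Saks \cite{edelmansaks} and used as a black box in Section 5 to compute the convex dimension of the crosspolytope geometry, so there is nothing internal to compare your argument against; it has to be judged on its own merits. On those merits it is correct, and it is essentially the original Edelman--Saks argument: the lower bound by showing that each copoint in an antichain is forced (by its maximality) to coincide with an initial segment of some generating ordering, incomparability then forcing distinct orderings; the upper bound by Dilworth's theorem plus refinement of each chain of copoints to a maximal chain of convex sets, read off as a shelling ordering. The single-point-cover property you lean on there is indeed standard and follows from intersection-closure plus the augmentation axiom: intersect with $Y$ a one-point-at-a-time chain of convex sets from $X$ to $E$; the first step where the intersection grows adds exactly one point of $Y\setminus X$.

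One statement needs repair. Your reduction lemma --- ``a family of orderings generates $\mathcal C$ if and only if every copoint occurs as an initial segment of one of them'' --- fails in the ``if'' direction as literally phrased: on $E=\{a,b\}$ with $\mathcal C=\{\emptyset,\{a\},E\}$, the two orderings $a\prec_1 b$ and $b\prec_2 a$ realize both copoints $\emptyset$ and $\{a\}$ as initial segments of $\prec_1$, yet the pair generates $2^E\ne\mathcal C$. The correct statement quantifies only over \emph{shelling} families, i.e.\ families all of whose initial segments are convex. This is a wording issue rather than a genuine gap: in your lower bound the families are generating, hence automatically shelling (as you note), and in your upper bound they are built from maximal chains in $\mathcal C$, so both applications satisfy the stronger hypothesis and the proof goes through once the lemma is restated.
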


\noindent
{\bfseries\sffamily The construction:}
Let $\{e_i\}_{i=1}^n$ be the standard basis in ${\mathbb R}^n$ and consider the point set
${CR}=\{0,\pm e_i\}$ and the convex geometry $(CR,\conv_{{\mathbb R}^n})$.
The points $\{\pm e_i\}_{i=1}^n$ are the vertices of the
crosspolytope (generalized octahedron, see Fig.~\ref{crosspoly} for $n=3$) (\cite{ziegler}). 

\begin{figure}[h]
\centering
\includegraphics{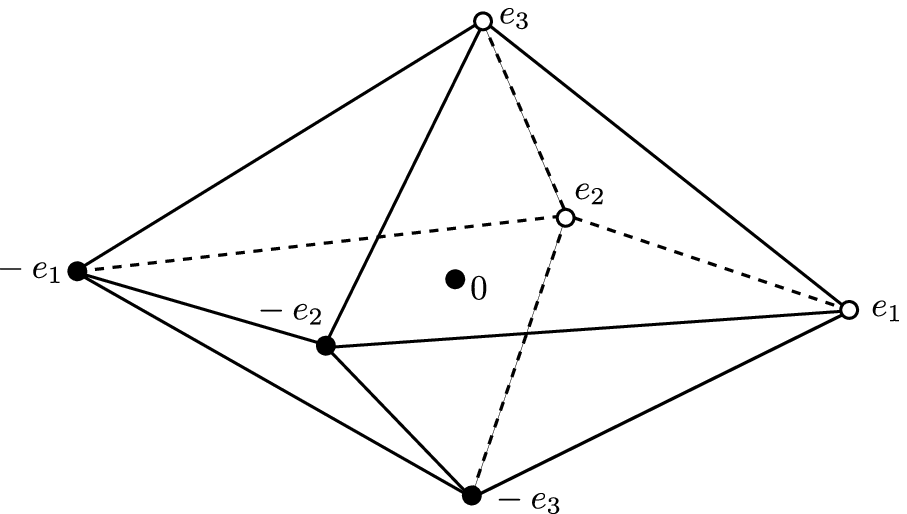}
\caption{}\label{crosspoly}
\end{figure}

The copoint of the point $e_i$ or $-e_i$ is clearly its complement.
Let $A$ be a copoint of $0$. On the one hand the points $e_i$ and $-e_i$
cannot belong to $A$ at the same time. 
On the other hand for any choice of $\varepsilon_i=\pm1$
the set of points $\{\varepsilon_i e_i\}_{i=1}^n$ form
a convex set of our geometry (they are the vertices of a hyperface of the crosspolytope)
not containing $0$, so these sets are the copoints of $0$. Each of them has
$n$ elements so they are inclusion-incomparable. Applying Theorem \ref{cdimcopoint}
we have that
\begin{equation}\label{cr}
\cdim(CR,\conv_{{\mathbb R}^n})=2^n.
\end{equation}

Lemma \ref{kcrossbound} and \eqref{cr} gives the following result. 
\begin{thm}
There is no integer $k_0$ such that each convex geometry can be represented with
a family of convex sets in the plane such that the sets have pairwise at most $k_0$ 
common supporting lines.
\end{thm}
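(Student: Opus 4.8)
The plan is to argue by contradiction, pitting the exponential growth of the convex dimension of the crosspolytope geometry against the merely polynomial bound supplied by Lemma \ref{kcrossbound}. Suppose, to the contrary, that some integer $k_0$ had the stated property.

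First I would fix a large dimension $d$ and consider the crosspolytope convex geometry $(CR,\conv_{{\mathbb R}^d})$ constructed above on the ground set $CR=\{0,\pm e_i\}_{i=1}^d$, which has $|CR|=2d+1$ points. By the standing assumption this geometry can be represented by a family $\mathscr{K}=\{K_1,\dots,K_{2d+1}\}$ of compact convex sets in the plane, any two of which share at most $k_0$ common supporting lines; here the number of sets equals the number of points because a representation is by definition an isomorphism of convex geometries, $(\mathscr{K},\conv_{\mathscr{K}})\cong(CR,\conv_{{\mathbb R}^d})$.

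Next I would combine two facts. On the one hand, convex dimension is an isomorphism invariant, so by \eqref{cr} the representing family satisfies $\cdim(\mathscr{K},\conv_{\mathscr{K}})=\cdim(CR,\conv_{{\mathbb R}^d})=2^d$. On the other hand, applying Lemma \ref{kcrossbound} to $\mathscr{K}$ with $n=2d+1$ gives $\cdim(\mathscr{K},\conv_{\mathscr{K}})\le k_0\binom{2d+1}{2}=k_0\,d(2d+1)$. Chaining these yields the inequality $2^d\le k_0\,d(2d+1)$.

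Finally I would observe that the right-hand side is a fixed polynomial in $d$ while the left-hand side grows exponentially, so the inequality fails for all sufficiently large $d$; choosing such a $d$ produces the desired contradiction. There is no genuinely hard step here: the whole argument is an exponential-versus-polynomial counting estimate, and the only point deserving care is the bookkeeping that the representing family has exactly $2d+1$ members and that $\cdim$ transfers across the isomorphism, both of which are immediate from the definitions.
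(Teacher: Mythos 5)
Your proposal is correct and is exactly the paper's argument: the author proves the theorem by combining Lemma \ref{kcrossbound} with the crosspolytope computation \eqref{cr}, leaving the exponential-versus-polynomial contradiction implicit, while you have simply spelled out that bookkeeping (the $2d+1$ ground elements, the transfer of $\cdim$ across the isomorphism, and the failure of $2^d\le k_0\,d(2d+1)$ for large $d$). No gaps, and no difference in approach.
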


Remark that this theorem yields further obstruction for representing convex geometries
with ellipses in the plane.

\section{Ellipsoids in higher dimensions}

Adaricheva and Cz\'edli \cite{adarichevabolat} raised the question
whether every finite convex geometry can be represented 
with balls in ${\mathbb R}^n$. In this section we prove a weaker statement, the
representation by ellipsoids. We remark that any finite family of ellipsoids
in ${\mathbb R}^n$ determine a convex geometry. The proof goes the same way as
of Lemma \ref{fincross} but now we use the fact that if the support functions of two
ellipsoids are equal on an open set of unit vectors then the two ellipsoids
are equal.

\begin{thm}\label{main1}
Any finite convex geometry with convex dimension $d$ can represented in
$\mathbb R^d$ with ellipsoids which are arbitrary close to a ball.
\end{thm}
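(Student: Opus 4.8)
The plan is to mimic the planar construction of Section~\ref{genplane}, replacing the squeezed convex polygons by genuine ellipsoids and reading off the combinatorics from support functions exactly as in the proof of Lemma~\ref{fincross}. Since we already know that a finite family of ellipsoids always carries a convex geometry (the support functions of two distinct ellipsoids cannot agree on an open set of unit vectors), it suffices to exhibit, for a geometry generated by $d$ orderings $\{\preccurlyeq_i\}_{i=1}^d$, a family $\mathscr K=\{E(x)\}_{x\in E}$ of ellipsoids in $\mathbb R^d$ for which $\Phi\colon x\mapsto E(x)$ is an isomorphism onto $(\mathscr K,\conv_{\mathscr K})$, each $E(x)$ lying within a prescribed Hausdorff distance of a fixed ball. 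First I would fix a small closeness parameter $\varepsilon>0$ and a frame $e_1,\dots,e_d$, one direction per ordering, and prescribe $E(x)$ through its support function $h(E(x),u)=\langle c(x),u\rangle+\sqrt{u^{\mathsf T}A(x)u}$ with $A(x)=\mathrm{Id}+\varepsilon B(x)$, choosing the center $c(x)$ and the symmetric matrix $B(x)$ small and depending only on the rank vector $(j_1(x),\dots,j_d(x))$, so that the support value of $E(x)$ in direction $e_i$ grows with $j_i(x)$. As $\varepsilon\to0$ each $E(x)$ tends to the unit ball, which yields the ``arbitrarily close to a ball'' clause; the content is to choose $c(x),B(x)$ so that the induced closure reproduces the geometry.

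The verification splits as in Section~\ref{genplane}. The easy half is that $X$ convex forces $\Phi(X)$ convex: if $z\notin X$ then some ordering $i$ has $x\prec_i z$ for all $x\in X$, hence $j_i(x)<j_i(z)$, and testing $u=e_i$ gives, by properties (b),(c), $h(E(z),e_i)>\max_{x\in X}h(E(x),e_i)=h(\conv_{\mathbb R^d}(\cup_{x\in X}E(x)),e_i)$, so $E(z)$ is not contained in the hull and $E(z)\notin\conv_{\mathscr K}(\Phi(X))$.

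The hard half, which I expect to be the main obstacle, is the converse: if $X$ is not convex, with witness $z$ and representatives $x_i\in X$ satisfying $j_i(z)<j_i(x_i)$ for every $i$, one must prove $E(z)\subseteq\conv_{\mathbb R^d}(\cup_{x\in X}E(x))$, i.e. $h(E(z),u)\le\max_{x\in X}h(E(x),u)$ for \emph{every} unit $u$, not merely for the coordinate directions. The difficulty is genuine: concentric axis‑parallel ellipsoids with $h(E(x),e_i)=1+O(\varepsilon j_i(x))$ already fail, because a convex hull cuts corners while the closure must fill them. For $d=2$, $z=(5,5)$, $x_1=(6,1)$, $x_2=(1,6)$ the support value of $E(z)$ in the diagonal direction $u=(1,1)/\sqrt2$ is $\approx 1+5\varepsilon$, whereas that of each $E(x_i)$ is only $\approx 1+\tfrac72\varepsilon$, so $E(z)$ protrudes beyond $\conv_{\mathbb R^2}(E(x_1)\cup E(x_2))$; a pure ``support proportional to rank'' encoding is therefore impossible, and this is precisely where balls are not known to suffice.

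Overcoming this is where the extra shape freedom of ellipsoids (the off‑diagonal and trace parts of $B(x)$, together with the centers $c(x)$) must be spent. Concretely I would arrange that a dominated point is \emph{engulfed} by making its ellipsoid smaller and more central than those of the $x_i$, so that in the problematic diagonal directions the $E(x_i)$ sit outside $E(z)$ despite losing to it coordinatewise in the other axes. After expanding $\sqrt{u^{\mathsf T}A(x)u}$ to first order in $\varepsilon$, the required inequality reduces to showing that the upper envelope of the quadratic forms $u\mapsto\langle c(x_i),u\rangle+\tfrac12 u^{\mathsf T}B(x_i)u$ dominates that of $z$ in all directions, which I would establish from the domination $j_i(z)<j_i(x_i)$ together with the freedom to take $\varepsilon$ small. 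Once this envelope estimate is in place, the ellipsoidal form of the support‑function criterion of Lemma~\ref{fincross} identifies $\conv_{\mathscr K}(\Phi(X))$ with $\Phi$ of the closure of $X$, and hence shows that $\Phi$ is an isomorphism.
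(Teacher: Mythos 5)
You have correctly isolated the two halves of the argument and, to your credit, the genuine crux: for a non-convex $X$ with witness $z$ and dominators $x_i$, the containment $E(z)\subseteq\conv_{{\mathbb R}^d}(\cup_i E(x_i))$ must be checked in \emph{every} direction, and your rank-$(5,5)$ versus $(6,1),(1,6)$ example correctly shows that any scheme in which the support excess is (to first order) linear in the ranks fails in diagonal directions. But your proposed repair is not a proof. You never specify $c(x)$ and $B(x)$ as functions of the rank vector, and the claim that the envelope inequality follows ``from the domination $j_i(z)<j_i(x_i)$ together with the freedom to take $\varepsilon$ small'' cannot be right as stated: after the first-order expansion the required inequality $\langle c(z),u\rangle+\tfrac12 u^{\mathsf T}B(z)u\le\max_i\{\langle c(x_i),u\rangle+\tfrac12 u^{\mathsf T}B(x_i)u\}$ is homogeneous in the perturbation size, so shrinking $\varepsilon$ changes nothing; that inequality, required simultaneously for all domination patterns that can occur in the geometry, is precisely the hard half, and your own counterexample shows it fails for the natural choices. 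So the engulfment step remains entirely open in your write-up.

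The paper closes this gap by a mechanism quite different from ``centers and off-diagonal freedom'': it keeps concentric, axis-aligned ellipsoids $E(a_1,\ldots,a_d)$ with $h(E,x)^2=\sum_i a_i^2x_i^2$, but makes the semi-axes depend on rank through the recursive sequence $f(1)=s$, $f(i+1)=\sqrt{(f(i)^2+d-1)/d}$, so that the excesses $f(i)^2-1$ decay \emph{geometrically} with ratio $1/d$ as rank decreases. The identity $d\,(f(i+1)^2-1)=f(i)^2-1$ is the whole point: for a unit vector $x$ the excess $h(\Phi(g),x)^2-1$ is a sum of $d$ nonnegative terms, hence at most $d$ times its largest term, and that factor $d$ is exactly absorbed by a single rank increment of the dominating element $h_i$. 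This yields $h(\Phi(g),x)^2\le\max_i h(E_i,x)^2$ in all directions, where $E_i\subseteq\Phi(h_i)$ is the ellipsoid stretched only along the $i$-th axis, and hence the engulfment $\Phi(g)\subseteq\conv_{{\mathbb R}^d}(\cup_i\Phi(h_i))$. In short, the missing idea is not extra ellipsoidal shape freedom but an exponential-in-rank scaling of the support excesses (incidentally, this also explains why first-order perturbation analysis is the wrong frame: the excesses in the paper's construction span many orders of magnitude). Without this, or an equivalent mechanism, your proposal does not establish the theorem.
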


\begin{proof}
We shall use ellipsoids in a very special position. For the real numbers
$a_1,\ldots,a_d$ consider the ellipsoid
\[E(a_1,\ldots,a_d)=\left\{(x_1,\ldots,x_d)\in{\mathbb R}^d\colon
\sum_{i=1}^d \frac{x_i^2}{a_i^2}\le1\right\}\]
The support function of this ellipsoid is 
\[h(E(a_1,\ldots,a_d),(x_1,\ldots,x_d))=\sqrt{\sum_{i=1}^d {a_i^2}{x_i^2}}\]
Starting with an arbitrary number $s>1$ we define the sequence of real numbers
\[f(1)=s,\qquad f(i+1)=\sqrt{\frac{f(i)^2+d-1}{d}}.\]
It is easy to check that
\[f(i)>1,\qquad f(i+1)<f(i),\qquad \lim_{i\to\infty}f(i)=1.\]
Let $G$ be a convex geometry defined by a collection of orderings
$\{\preccurlyeq_i\}_{i=1}^d$. For $g\in G$ let $j_i(g)$ be the
$g$'s place according to the $i^{\text{th}}$ ordering.
We associate to a $g\in G$ an ellipsoid $\Phi(g)$ where
\[\Phi(g)=E(f(d+1-j_1(g)),\ldots,f(d+1-j_d(g))),\]
and let ${\mathscr K}=\{\Phi(g)\colon g\in G\}$.
We prove that the map $\Phi$ is an isomorphism between the convex geometries
$(G,\preccurlyeq_i\}_{i=1}^d)$ and $({\mathscr K},\conv_{\mathscr K})$.

Take a convex set $H\subseteq G$ and $g\not\in H$. Then there is some $i$
such that for all $h\in H$ we have $h\prec_i g$ and therefore 
$j_i(g)>j_i(h)$ which gives that $d+1-j_i(g)<d+1-j_i(h)$
and $f(d+1-j_i(g))>f(d+1-j_i(h))$. This means that all 
ellipsoids $\Phi(h)$ are in the halfspace $\{x_i<f(d+1-j_i(g))\}$
so is their convex hull, but this halfspace does not contain $\Phi(g)$.
We get that $\Phi(g)\not\in\conv_{\mathscr K}(\Phi(H))$ and so
$\Phi(H)$ is convex in ${\mathscr K}$. 

Conversely, suppose $H\subseteq G$ is not convex in $G$.
Then there is a $g\not\in H$ such that for all $i$ there is
an $h_i\in H$ with $g\prec_i h_i$. Thus $j_i(g)+1\le j_i(h_i)$ and
\[f(d+1-j_i(g))<f(d-j_i(g))\le f(d+1-j_i(h_i)).\]
This implies that
\[E_i=E(1,\ldots,f(d-j_i(g))),\ldots,1)\subseteq \Phi(h_i).\]
We prove that $\Phi(g)\subseteq\conv_{\mathbb R^d}(\cup_i E_i)$
Using the properties (a), (b), (c) of the support function, we must prove that
$h(\Phi(g),x)\le\max_i\{h(E_i,x)\}$ for all $x$. All the support functions are
positive so enough to consider its square. For an arbitrary 
$x=(x_1,\ldots,x_d)\in {\mathbb R}^d$, $\sum x_i^2=1$
\begin{align*}
h(\Phi(g),x)^2&=\sum_i f(d+1-j_i(g))^2x_i^2=1+\sum_i (f(d+1-j_i(g))^2-1)x_i^2\\
&\le 1+\max_i \{d(f(d+1-j_i(g))^2-1)x_i^2\}
=1+\max_i \{(f(d-j_i(g))^2-1)x_i^2\}\\
&=\max_i \Big\{f(d-j_i(g))^2x_i^2+\sum_{k\not=i}x_k^2\Big\}\\
&=\max_i\{h(E_i,x)^2\}
\end{align*}
In the second row we used that $d(f(d+1-j_i(g))^2-1)=f(d-j_i(g))^2-1$
which comes from the definition of the sequence.
Thus we get that 
$\Phi(g)\subseteq\conv_{\mathbb R^d}(\cup_i E_i)\subseteq\conv_{\mathbb R^d}(\cup_i \Phi(h_i))$
and we know that $\Phi(g)\not\in\Phi(H)$ which gives that $\Phi(H)$ is not convex in
${\mathscr K}$.

Each ellipsoid of the construction contains the unit ball and is contained
in the ball with center the origin and radius $s$ so they are close
to to the unit ball if $s$ sufficiently close to 1.
\end{proof}

\bigskip
\leftline{J. Kincses}
\leftline{University of Szeged, Bolyai Institute}
\leftline{Aradi v\'ertan\'uk tere 1., H-6720 Szeged, Hungary}
\leftline{e-mail: kincses@math.u-szeged.hu}
\end{document}